\def\e{\eqref}
\def\i1n{i=1,\cdots,n}
\def\j1n{j=1,\cdots,n}
\def\ij1n{i,j=1,\cdots,n}
\def\R{\mathbb R}
\def \i{\mathrm i}
 \numberwithin{equation}{section}
\theoremstyle{definition}
\def\R{{\bf R}}
\def\e{{\varepsilon}}
\newtheorem{thm}{Theorem}[section]
\newtheorem{lem}{Lemma}[section]
\newtheorem{rem}{Remark}[section]
\theoremstyle{definition}
\theoremstyle{theorem}
\theoremstyle{lemma}
\begin{document}

\begin{CJK*}{GB}{gbsn}
\title{Formation of Finite Time Singularity for Axially Symmetric Magnetohydrodynamic Waves in $3$-$D$}


\author{Lv Cai \thanks{School of Mathematical Sciences, Fudan University, Shanghai 200433, China (18110180023@fudan.edu.cn).}
\and Ning-An Lai \thanks{Corresponding author at: School of Mathematical Sciences, Zhejing Normal University, Jinhua 321004, China (ninganlai@zjnu.edu.cn).}}

\maketitle

\begin{abstract}
In this paper we study the compressible magnetohydrodynamics equations in three dimensions, which offer a good model for plasmas. Formation of singularity for $C^1$-solution in finite time is proved with axisymmetric initial data. The key observation is that the magnetic force term admits good structure with axisymmtric assumption.

\end{abstract}

\emph{keywords}: Singularity, Compressible, Magnetohydrodynamics, Axially Symmetric, Test Function

\emph{2010 MSC}: 35L67, 76W05

\section{Introduction}
\par

In this article we consider the compressible magnetohydrodynamics equations in $\R^3$, which can be used to describe the motion of a compressible perfectly conducting fluid in a magnetic field $B=(B^1, B^2, B^3)$. The equations can be stated as
\begin{equation}\label{mhd}
\begin{aligned}
\begin{split}
\left\lbrace
\begin{array}{lr}
\rho_{t} + \nabla \cdot (\rho u) =0, \\
\rho ( \partial_{t} u + u \cdot \nabla u ) + \nabla p - \mu^{-1} (\nabla \times B) \times B = 0 , \\
S_{t} + u \cdot \nabla S =0, \\
B_{t} - \nabla \times ( u \times B ) =0, \\
p=A \rho^{ \gamma } e^{S}, ~~~~~(A>0, \gamma>1),\\
\nabla \cdot B =0,\\
\end{array}	
\right.
\end{split}	
\end{aligned}
\end{equation}
where $ \rho,u=(u_1, u_2, u_3), p$ and $S$ are density, velocity, pressure and specific entropy of the fluid, respectively. The positive parameter $\mu$ denotes the magnetic permeability. The equation $\eqref{mhd}_6$ can be viewed as a constraint, if the magnetic field is divergence free initially then it will remain divergence free for all the time $t>0$. The state equation $\eqref{mhd}_5$ with adiabatic index $\gamma>1$ means that we are considering a polytropic fluid.

The smooth initial data
\begin{equation}\label{initialdata}
\begin{aligned}
\begin{split}
t=0:\left\lbrace
\begin{array}{lr}
\rho(0, x)=\overline{\rho}+\e\rho_0(x), \\
u(0, x)=\e u_0(x) , \\
B(0, x)=\e B_0(x), \\
S(0, x)=\overline{S}+\e S_0(x),\\
\end{array}	
\right.
\end{split}	
\end{aligned}
\end{equation}
satisfy
\begin{equation} \label{supp}
\begin{aligned}
supp~\left\{\rho_0(x), u_0(x), B_0(x), S_0(x)\right\}\subset \left\{x\big||x|\le L\right\}
\end{aligned}
\end{equation}
where $\overline{\rho}, L>0$ and $\overline{S}\in \R$ are constants, and $\e$ is a parameter to describe the smallness of the initial perturbations. Without loss of generality, we may assume that $\overline{\rho}=L=1$.

The compressible magnetohydrodynamic equations offer a good model of plasmas. A plasma is an ionized gas in which the flow of electrical charge plays a significant role in the dynamics. They couple the compressible Euler system to equations of motion for the magnetic field, which are derived from the Maxwell system and some simplifying approximation. We may refer the reader to the classic monograph (chapter $IV$, 37) \cite{Chan} for the detailed derivation.

The system \eqref{mhd} can be rewritten as a symmetric hyperbolic system (details will be presented below), which was introduced by Friedrichs \cite{F54}. As is well known, the general first order hyperbolic system admits a unique local solution in the Sobolev space $H^s$ and $s>\frac n2+1$ ($n$ denotes the space dimension), which can be extended as long as it remains bounded in $C^1$, see the pioneering work \cite{FL71, Lax71, FM72,Kat75a}. This kind of result leads to possible types of singularity from smooth initial data: (a)the solution itself may remain bounded, while the first derivatives become unbounded; (b)the solution itself grows without bound. We often say that a gradient blow-up or shock occurs in the former case, while that the solution blows up in the latter case.

There are many classic results about the shock formation for the equations of fluid mechanics in one space
dimension, and more generally for systems of conservation laws in one space dimension, see \cite{Lax64,Lax73,John74}. These results are established by characteristic method, under a certain structural assumption on the
system called genuine nonlinearity and suitable conditions on the initial data. The characteristic method seems not traceable for high dimensional system ($n\ge 2$). Even for the system \eqref{mhd} in $1$-$D$ (the unknowns are functions of time $t\ge 0$ and one space variable $x\in \R$), it is not easy to obtain shock formation by this method, since the MHD system has s linearly degenerate characteristic field, see \cite{Liu79}. In some sense this means that the shock develop for magnetohydrodynamics, that is, the mechanics of a perfectly electrically conducting fluid in the presence of a magnetic field remains open, see the conclusive introduction (page $21$) in the monograph \cite{Chris07}.

The first and general blow-up result (related to case $b$ mentioned above) in three dimensional fluids was obtained by Sideris \cite{Sid85}, in which he considered the compressible
Euler equations for ideal gas with adiabatic index $\gamma>1$ and
with initial data which coincide with constant states outside a ball. With some additional assumptions on the initial data a finite time blow-up of an averaged quantity of the solution (the density) was established. Similar blow-up results for $2$-$D$ compressible Euler system were obtained in \cite{Ram89} by using a similar method. In both \cite{Sid85} and \cite{Ram89}, the blow-up functional involves the departure of the density $\rho$ from
its value $\overline{\rho}$ in the constant state and a kernel which
is function of the distance from the center. Such kind of averaged quantity satisfies a one dimensional nonlinear wave equation, and then the one dimensional d'Alembertian plays a key role to obtain the desired blow-up result. The idea has been used to show finite time blow-up for $1$-$D$ MHD system \eqref{mhd} (the unknown functions depend on time $t\ge 0$ and one space variable $x\in \R$) in \cite{Ram94}. It works for MHD waves \eqref{mhd} in one space dimension due to the reason that the magnetic force term will be of the gradient of a positive quantity, which is not the case in the more difficult problem for MHD wave system in two and three space dimensions. Recently, the distance function kernel used in \cite{Sid85,Ram89} has been replaced by a special test function in \cite{JZ20, LXZ22}, and gave another method to show finite time blow-up for compressible Euler system for both $\gamma>1$ and $\gamma=1$. What is more, this method also works for the $2$-$D$ magnetohydrodynamics system \eqref{mhd} with a vertical magnetic field, thus,
\begin{equation}\label{2d}
\rho(t, x_1, x_2),~u(t, x_1, x_2)=(u_1, u_2, 0),~B(t, x_1, x_2)=(0, 0, b),
\end{equation}
see \cite{JZ20}, in which they found an interesting property of the $2$-$D$ solution \eqref{2d}, thus, $\frac{b}{\rho}$ takes a constant value along the characteristic $\frac{dx}{dt}=u$.

Coming to the physical case of three space dimensions for the MHD system \eqref{mhd}, to the best of our knowledge, there is still no formation of singularity result. Even for the blow-up result, the good structure of magnetic force in $1$-$D$ or property for $\frac b \rho$ in $2$-$D$ does not hold again in $3$-$D$. However, if we consider the axially symmetric solution
\begin{equation}\label{axisym}
\begin{aligned}
\begin{split}
\left\lbrace
\begin{array}{lr}
u (t,x) = u^{r}(t,r,z)e^{r} + u^{z}(t,r,z)e^{z} , \\
B (t,x) = B^{\theta}(t,r,z)e^{\theta}, \\
p (t,x) = p(t,r,z),\\
S(t, x)=S(t, r, z)\\
\end{array}	
\right.
\end{split}	
\end{aligned}
\end{equation}
to the system \eqref{mhd}, the magnetic force term in $\eqref{mhd}_2$ still admits a good structure: the gradient of a positive quantity plus a nonnegative term. This key observation then leads to our main blow-up result.

\begin{thm} \label{thm1}
	Assume that the initial data \eqref{initialdata} satisfy \eqref{supp} and
	\begin{equation}\label{initialS}
	S(0, x)\ge \overline{S},~~~x\in \R.
	\end{equation}
	If we further assume
	\begin{equation}
	\begin{aligned}
	\e\left(\int_{\mathbb{R}^{3}} \rho_{0} F(x) dx + \int_{\mathbb{R}^{3}} \nabla F\cdot u_0(x) dx \right)\triangleq C\e>0,
	\end{aligned}
	\end{equation}
	where $F$ is defined in \eqref{F} below,
	then the axisymmetric $C^1$-solution of \eqref{mhd} will blow up in a finite time, and the existence time $T$ is bounded from above by $\exp\left(C \epsilon^{-1}\right)$.
\end{thm}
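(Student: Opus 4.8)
The plan is to marry the classical Sideris-type averaging argument with the test-function method of \cite{JZ20,LXZ22}, the genuinely new ingredient being the algebraic structure of the Lorentz force under the ansatz \eqref{axisym}. First I would record the analytic preliminaries. Writing \eqref{mhd} as a Friedrichs-symmetric hyperbolic system, the cited local theory \cite{FL71,Lax71,FM72,Kat75a} produces a unique axisymmetric $C^1$ solution on a maximal interval $[0,T)$ together with the continuation criterion reducing the theorem to showing that some averaged functional (hence the $C^1$ norm) must become infinite by $t=\exp(C\e^{-1})$. Finite propagation speed then confines the perturbation: there is $\sigma>0$ (a bound for the sound and Alfvén speeds on the constant state) with the solution equal to $(\overline{\rho},0,0,\overline{S})$ outside $\{|x|\le 1+\sigma t\}$, so every spatial integral below runs over a region of volume $O((1+t)^3)$. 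Finally, since the entropy is transported, $S_t+u\cdot\nabla S=0$, hypothesis \eqref{initialS} gives $S(t,x)\ge\overline{S}$ for all $t$, whence $p=A\rho^\gamma e^{S}\ge A e^{\overline{S}}\rho^\gamma$; combining this with the convexity of $\rho\mapsto\rho^\gamma$ and the conserved mass $\int(\rho-\overline{\rho})\,dx=\e\int\rho_0\,dx$ yields a time-uniform lower bound $\int_{\R^3}(p-\overline{p})\,dx\ge c_0\e$, which will be the engine driving the functional upward.

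The heart of the matter is the Lorentz term. Under \eqref{axisym} with $B=B^\theta e^\theta$, the cylindrical curl is $\nabla\times B=-\partial_z B^\theta\,e^{r}+\tfrac1r\partial_r(rB^\theta)\,e^{z}$, so that
\begin{equation*}
\mu^{-1}(\nabla\times B)\times B=-\nabla\!\Big(\frac{(B^\theta)^2}{2\mu}\Big)-\frac{(B^\theta)^2}{\mu r}\,e^{r},
\end{equation*}
and $\eqref{mhd}_2$ becomes $\rho(u_t+u\cdot\nabla u)+\nabla\big(p+\tfrac{(B^\theta)^2}{2\mu}\big)+\tfrac{(B^\theta)^2}{\mu r}e^{r}=0$: the magnetic force is the gradient of the nonnegative magnetic pressure plus the nonnegative radial term $\tfrac{(B^\theta)^2}{\mu r}e^{r}$, exactly the structure announced in the introduction. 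The crucial point is that when this force is tested against $\nabla F$ for a radial convex $F=f(|x|)$ and integrated by parts, the gradient part contributes $\tfrac1{2\mu}\int(B^\theta)^2\Delta F$, the radial part contributes a term that cancels precisely the angular piece of $\Delta F$, and one is left with the single clean remainder $\tfrac1{2\mu}\int(B^\theta)^2 f''(|x|)\,dx\ge0$. Thus the troublesome magnetic contribution acquires a definite good sign exactly on radial convex test functions, which is what makes the three-dimensional argument close where the $1$-$D$ and $2$-$D$ tricks no longer apply.

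With these pieces in place I would introduce the functional suggested by the hypothesis, built from the test function $F$ of \eqref{F}, namely the pairing of the conservation laws against $F$ and $\nabla F$, whose value at $t=0$ equals $\e\big(\int\rho_0F+\int\nabla F\cdot u_0\big)+O(\e^2)=C\e+O(\e^2)>0$. Differentiating once in $t$ and using the continuity equation, then differentiating again and using the momentum equation in the form above, with $F$ radial, increasing and convex so that $D^2F\ge0$ and $\Delta F\ge0$, every nonlinear source becomes nonnegative: the kinetic term $\int\rho(u\otimes u):D^2F$, the pressure term $\int(p-\overline{p})\Delta F$, and the magnetic remainder $\tfrac1{2\mu}\int(B^\theta)^2f''$. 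Feeding the time-uniform bound $\int(p-\overline{p})\,dx\ge c_0\e$ back through a Cauchy–Schwarz estimate of the first moment by $\big(\int\rho|u|^2\big)^{1/2}\big(\int\rho|\nabla F|^2\big)^{1/2}$, in which the weight is controlled by the light-cone volume, gives a Riccati-type differential inequality for the functional; integrating it against the slowly growing support weight and inserting the positive initial value $\sim C\e$ forces the functional to blow up no later than $t=\exp(C\e^{-1})$, contradicting the $C^1$ bound and proving Theorem~\ref{thm1}.

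The main obstacle, and the place where the axisymmetric structure is indispensable, is balancing two competing demands in the choice of $F$ and the closing of the inequality. On the one hand the pressure integrand $(p-\overline{p})\Delta F$ is only pointwise nonnegative where $\rho\ge\overline{\rho}$, so one must arrange $\Delta F$ so that the net positive mass/convexity contribution $\gtrsim\e$ survives while the indefinite part is absorbed into the (nonnegative) kinetic term; on the other hand $F$ must stay convex so that the magnetic remainder keeps its sign, yet $\int\rho|\nabla F|^2$ must grow slowly enough against the cubic light-cone volume to produce the correct logarithmic-type Riccati coefficient, and hence exactly the lifespan $\exp(C\e^{-1})$ rather than a weaker bound. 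Reconciling these constraints is the delicate step, and the specific $F$ in \eqref{F} is presumably engineered precisely to resolve this tension.
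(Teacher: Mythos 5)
Your structural analysis matches the paper's: the decomposition $(\nabla\times B)\times B=-\nabla\frac{(B^\theta)^2}{2}-\frac{(B^\theta)^2}{r}e^r$ is exactly the reformulation \eqref{eq}, and your observation that pairing the Lorentz force with $\nabla F$ for radial convex $F$ leaves only the good-signed remainder $\frac{1}{2\mu}\int(B^\theta)^2F''_{RR}\,dx$ is precisely the computation \eqref{Fer}--\eqref{dY1}. The entropy transport bound, finite propagation speed, and the positivity of the initial functional are also as in the paper. The gap is in how you close the argument. Your proposed ``engine'' is the time-uniform bound $\int_{\R^3}(p-\overline{p})\,dx\ge c_0\varepsilon$ obtained from convexity and mass conservation; this needs $\int\rho_0\,dx>0$, which is \emph{not} a hypothesis of the theorem --- only the $F$-weighted quantity $\int\rho_0F\,dx+\int\nabla F\cdot u_0\,dx$ is assumed positive, and positivity of a weighted integral does not imply positivity of the unweighted one. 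Moreover, even granting it, the unweighted bound cannot control the term that actually appears in the differential inequality, namely the weighted integral $\int(p-\overline{p})F\,dx$ with $F\sim e^{|x|}/|x|$: that integral can be very negative while the unweighted one is $\ge c_0\varepsilon$.

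Second, and more fundamentally, the Sideris-style Riccati closing you propose --- Cauchy--Schwarz on the first moment $Y=\int\rho u\cdot\nabla F\,dx$ against the light-cone-truncated weight --- cannot produce blow-up with this exponential test function. However you arrange it, the resulting Riccati coefficient decays exponentially in $t$, hence is integrable; and a Riccati inequality $Y'\gtrsim Y^2/(e^t(1+t))$ with $Y(0)\sim\varepsilon$ has globally bounded solutions for small $\varepsilon$, as one sees by integrating $-\frac{d}{dt}Y^{-1}\gtrsim (e^t(1+t))^{-1}$. What makes the paper's proof work is the \emph{linear} term, which your proposal never isolates: from $S\ge\overline{S}$ the paper derives the pointwise bound $p-\overline{p}\ge(\rho-1)+Ae^{\overline{S}}(\rho-1)^2$ in \eqref{pp} (with \eqref{pp1}, \eqref{N} for other $\gamma$), so that $Y'\ge X+C\int(\rho-1)^2F\,dx$, and then H\"{o}lder is applied to $X$, not to $Y$ (see \eqref{nonX}), producing the system $X'=Y$, $Y'\ge X+CX^2/(e^t(1+t))$. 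Blow-up at time $\exp(C\varepsilon^{-1})$ comes from the interplay of the linear term (which pumps $X$ up to size $\varepsilon e^t$) with the exponentially damped quadratic term; this is exactly the content of Lemma \ref{lem5} (Lemma 2.1 of \cite{JZ20}). Your alternative, absorbing the sign-indefinite part of $\int(p-\overline{p})\Delta F$ into the kinetic term, cannot work either: at points where $\rho<1$ and $u=0$ the pressure integrand is negative while the kinetic term vanishes, so there is nothing to absorb it into. Finally, for $1<\gamma<2$ the pointwise quadratic lower bound fails as $\rho\to 0$, and the paper needs the Orlicz-space argument \eqref{N}--\eqref{N5} together with Lemma \ref{lem:lizhou-var} from \cite{LS23}; a convexity remark alone does not cover this range.
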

Hereinafter, $C$ denotes a generic positive constant independent of $\e$, whose value may change from line to line.

\begin{rem}
	
	According to the work \cite{Sid91}, we believe that the above upper bound of lifespan is optimal, in the sense that we may obtain the same lower bound with respect to $\e$ if we consider the irrotational MHD system.
	
\end{rem}

\begin{rem}
	The upper bound of lifespan in Theorem \ref{thm1} holds for all $\gamma>1$, while for the $1$-$D$ case in \cite{Ram94}, the optimal upper bound of lifespan was established only for $\gamma=2$ and he stated that for the other $\gamma>1$, the lifespan will be different from that of $\gamma=2$. Also for the $2$-$D$ case in \cite{JZ20}, the upper bound of lifespan was established only for $\gamma=2$. Our method can give the same lifespan estimate from above for the system \eqref{mhd} with all $\gamma>1$ in both $\R$ and $\R^2$.
	
\end{rem}

\section{Local Existence and Finite Speed of Propagation}

The MHD system \eqref{mhd} can be written as a symmetric hyperbolic system in the form
\begin{equation}\label{sym}
\begin{aligned}
A_0(U)U_t+\sum_{i=1}^{3}A_i(U)U_{x_i}=0,
\end{aligned}
\end{equation}
where $U=(p, u, S, B)^{T}$, $A_0(U)$ is positive definite, and $A_i(U) (i=0, 1, 2, 3)$ are $8\times 8$ symmetric matrices. It is now well known that the symmetric hyperbolic system \eqref{sym} admits a unique local $C^1$-solution over $[0, T)$ for some $T>0$, provided the initial data are sufficiently regular, see \cite{Kat75a, Ma84}.

Denote the sound speed by
\[
\sigma(\rho, s)=\sqrt{\frac{\partial p}{\partial \rho}(\rho, S)}.
\]
With out loss of generality, we may set $\overline{\rho}=1$, $A=\left(\gamma e^{\overline{S}}\right)^{-1}$, so $\overline{\sigma}=\sigma(1, \overline{S})=1$, and we then have the following finite speed of propagation of the initial perturbation.
\begin{lem}\label{finite}
	Let $(\rho, u, S, B)\in C^1([0, T)\times \R^3)$ solve the MHD system \eqref{mhd}, then for $0\le t<T$
	\begin{equation}\label{finites}
	\begin{aligned}
	(\rho, u, S, B)=(1, 0, \overline{S}, 0),~~~|x|\ge t+1.
	\end{aligned}
	\end{equation}
\end{lem}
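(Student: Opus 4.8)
The plan is to use the classical energy method for symmetric hyperbolic systems on a backward characteristic cone, leaning on the normalization $\overline{\sigma}=1$ to fix the cone slope. First I would subtract the constant background state $\overline{U}=(\overline{p},0,\overline{S},0)$, with $\overline{p}=p(1,\overline{S})$, and set $W=U-\overline{U}$. Since $\overline{U}$ is itself a constant solution of \eqref{sym}, the perturbation $W$ satisfies the homogeneous system $A_0(U)W_t+\sum_{i=1}^3 A_i(U)W_{x_i}=0$, and by \eqref{supp} together with $\overline{\rho}=L=1$ we have $W(0,\cdot)\equiv 0$ on $\{|x|\ge 1\}$.

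Next, fixing a point $(t_0,x_0)$ with $0\le t_0<T$ and $|x_0|\ge t_0+1$, the goal reduces to proving $W(t_0,x_0)=0$. I would work on the slope-one backward cone $K=\{(s,x):0\le s\le t_0,\ |x-x_0|\le t_0-s\}$. Its base $\{s=0\}$ is the ball $\{|x-x_0|\le t_0\}$, every point of which satisfies $|x|\ge |x_0|-t_0\ge 1$, so $W$ vanishes on the base. Multiplying the equation by $2W^{T}$ and using the symmetry of the coefficient matrices gives the identity
\[
\partial_s\big(W^{T}A_0 W\big)+\sum_{i=1}^3\partial_{x_i}\big(W^{T}A_i W\big)=W^{T}\Big(\partial_s A_0+\sum_{i=1}^3\partial_{x_i}A_i\Big)W .
\]
Because $U\in C^1$ and the $A_j$ depend smoothly on $U$, the matrix on the right is continuous, hence bounded by some constant $M$ on the compact set $K$, so the right-hand side is dominated by $M|W|^2$.

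I would then set $E(s)=\int_{|x-x_0|\le t_0-s}W^{T}A_0(U)W\,dx$, differentiate in $s$, and convert the spatial divergence into a surface integral over the shrinking sphere. The lateral flux produces the quadratic form $W^{T}\big(A_0+\sum_i\omega_i A_i\big)W$, where $\omega$ is the outward unit normal. The decisive point is that this flux matrix is positive semidefinite precisely when the cone slope (here $1$) dominates the largest characteristic speed of \eqref{sym} in direction $\omega$. Since the fast MHD speed reduces to the sound speed when $u=0$ and $B=0$, and $\overline{\sigma}=1$ by the normalization, the maximal wave speed equals $1$ along the leading edge of the perturbation—where the state is exactly $\overline{U}$—so the lateral term has the favorable sign and can be discarded. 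This leaves $E'(s)\le C E(s)$ with $E(0)=0$, and Grönwall's inequality forces $E\equiv 0$, hence $W\equiv 0$ on $K$ and in particular $W(t_0,x_0)=0$.

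I expect the positivity of the lateral flux to be the main obstacle. For a general (not necessarily small) $C^1$ solution the local wave speed can exceed $1$ in the interior, so slope $1$ is not immediately admissible; the honest justification is a continuity argument applying the energy estimate on thin slabs $[t_\ast,t_\ast+\delta]$, on which $W$ vanishes on the base and is therefore $O(\delta)$, making the speed excess $O(\delta)$ and the admissible slope $1+O(\delta)$, after which $\delta\to 0$ recovers slope exactly $1$ and confines the support of the perturbation to $\{|x|\le t+1\}$. The remaining ingredients—the energy identity, the uniform bound $M$, and the Grönwall closure—are routine.
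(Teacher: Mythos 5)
Your proposal is correct and follows essentially the same route as the paper, whose entire proof is a citation of the local energy estimate over truncated light cones in \cite{Sid84}: energy identity for $W=U-\overline{U}$ in the symmetric-hyperbolic form \eqref{sym}, positivity of the lateral flux $A_0+\sum_i\omega_iA_i$ when the cone slope dominates the characteristic speeds, Gr\"onwall on shrinking balls, and a continuity-in-time argument to attain the sharp slope $\overline{\sigma}=1$ determined by the background state. Your final paragraph correctly identifies (and resolves, by the thin-slab bootstrap) the only delicate point, namely that interior wave speeds need not be bounded by $1$, which is exactly the issue the cited local energy estimate handles.
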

The proof of Lemma \ref{finite} can be done by local energy estimate over the truncated light cone, we refer to the reference \cite{Sid84} for details.

\section{Reformulation in Cylindrical Coordinate}

We will reformulate the system \eqref{mhd} in cylindrical coordinate. Denote a point in $\mathbb{R}^{3}$ by $x=(x_1, x_2, x_3)$, and the corresponding point in cylindrical coordinate by $(r, \theta, z)$ with
$ r = \sqrt{x_{1}^{2} + x_{2}^{2}} $ and
\begin{equation}
\begin{aligned}
\begin{split}
\left\lbrace
\begin{array}{lr}
e^{r} = ( \frac{x_{1}}{r}, \frac{x_{2}}{r} , 0 )^{T} = ( \cos \theta, \sin \theta, 0 )^{T}, \\
e^{\theta} = ( - \frac{x_{2}}{r}, \frac{x_{1}}{r} , 0 )^{T} ( -\sin \theta, \cos \theta, 0 )^{T}, \\
e^{z} = ( 0,0,1 )^{T}.
\end{array}	
\right.
\end{split}	
\end{aligned}
\end{equation}

Let
\begin{equation}
\begin{aligned}
\begin{split}
\left\lbrace
\begin{array}{lr}
u (t,x) = u^{r}(t,r,\theta, z)e^{r} + u^{\theta}(t,r,\theta, z)e^{\theta} + u^{z}(t,r, \theta,z)e^{z} , \\
B (t,x) = B^{r}(t,r,\theta,z)e^{r} + B^{\theta}(t,r,\theta,z)e^{\theta} + B^{z}(t,r,\theta,z)e^{z} , \\
p (t,x) = p(t,r,\theta,z),\\
S(t, x)=S(t, r, \theta, z).
\end{array}	
\right.
\end{split}	
\end{aligned}
\end{equation}
The relationship between the bases of these two different coordinate systems is
\begin{equation}
\begin{aligned}
\begin{split}
\left\lbrace
\begin{array}{lr}
e^{1} = \cos \theta e^{r} - \sin \theta e^{\theta}, \\
e^{2} = \sin \theta e^{r} + \cos \theta e^{\theta}, \\
e^{3} = e^{z}.
\end{array}	
\right.
\end{split}	
\end{aligned}
\end{equation}
Noting that
\begin{equation}
\begin{aligned}
\begin{split}
\left\lbrace
\begin{array}{lr}
\partial_{\theta} e^{r} = e^{\theta}, \\
\partial_{\theta} e^{\theta} = - e^{r},
\end{array}	
\right.
\end{split}	
\end{aligned}
\end{equation}
then direct computations yields
\begin{equation}\label{grau}
\begin{aligned}
\nabla u & = \partial_{r} u e^{r} + \frac{1}{r} \partial_{\theta} u e^{\theta} + \partial_{z} u e^{z} \\
& = \partial_{r} u^{r} e^{r} \otimes e^{r}+ \partial_{r} u^{\theta} e^{\theta} \otimes e^{r} + \partial_{r} u^{z} e^{z} \otimes e^{r} \\
& + \frac{1}{r} \left[ (  \partial_{\theta} u^{r} - u^{\theta})e^{r} \otimes e^{\theta} + (  \partial_{\theta} u^{\theta} + u^{r} )e^{\theta}\otimes e^{\theta} +\partial_{\theta} u^{z}e^{z}\otimes e^{\theta} \right] \\
& + \partial_{z} u^{r} e^{r} \otimes e^{z}+ \partial_{z} u^{\theta} e^{\theta} \otimes e^{z} + \partial_{z} u^{z} e^{z} \otimes e^{z},
\end{aligned}
\end{equation}
\begin{equation}
\begin{aligned}\label{ugrau}
u \cdot \nabla u & = u^{r}\partial_{r} u + u^{\theta} \cdot \frac{1}{r} \partial_{\theta} u + u^{z}\partial_{z} u \\
& =u^{r} \partial_{r} u^{r} e^{r} + u^{r} \partial_{r} u^{\theta} e^{\theta} + u^{r} \partial_{r} u^{z} e^{z} \\
& + \frac{1}{r} \left[ ( u^{\theta} \cdot \partial_{\theta} u^{r} - (u^{\theta})^{2})e^{r} + ( u^{\theta} \partial_{\theta} u^{\theta} + u^{\theta} u^{r} )e^{\theta} +u^{\theta} \partial_{\theta} u^{z}e^{z} \right] \\
& + u^{z} \partial_{z} u^{r} e^{r} + u^{z} \partial_{z} u^{\theta} e^{\theta} + u^{z} \partial_{z} u^{z} e^{z} \\
& = \left[ u^{r} \partial_{r} u^{r} + \frac{1}{r} ( u^{\theta} \cdot \partial_{\theta} u^{r} - (u^{\theta})^{2}) + u^{z} \partial_{z} u^{r}  \right] e^{r} \\
& + \left[ u^{r} \partial_{r} u^{\theta} + \frac{1}{r} ( u^{\theta} \cdot \partial_{\theta} u^{\theta} + u^{\theta}u^{r}) + u^{z} \partial_{z} u^{\theta} \right ] e^{\theta} \\
& +\left [ u^{r} \partial_{r} u^{z} + \frac{1}{r}  u^{\theta} \cdot \partial_{\theta} u^{z} + u^{z} \partial_{z} u^{z} \right ] e^{z},
\end{aligned}
\end{equation}
\begin{equation}\label{grap}
\begin{aligned}
\nabla p = \partial_{r} p e^{r} + \frac{1}{r} \partial_{\theta} p e^{\theta} + \partial_{z} p e^{z},
\end{aligned}
\end{equation}
\begin{equation}\label{divu}
\begin{aligned}
\nabla \cdot u & = \frac{1}{r} \partial_{r} (r u^{r}) + \frac{1}{r} \partial_{\theta} u^{\theta} + \partial_{z} u^{z} \\
& = \frac{1}{r} u^{r} + \partial_{r} u^{r}+ \frac{1}{r} \partial_{\theta} u^{\theta} + \partial_{z} u^{z}
\end{aligned}
\end{equation}
and
\begin{equation}
\begin{aligned}
\Delta u & =  \nabla \cdot (\nabla u ) \\
& = \left( e^{r} \partial_{r} + \frac{1}{r}  e^{\theta} \partial_{\theta} +  e^{z} \partial_{z} \right) \cdot \left(\partial_{r} u e^{r} + \frac{1}{r} \partial_{\theta} u e^{\theta} + \partial_{z} u e^{z} \right)\\
& = \partial_{r}^{2} u + \frac{1}{r}  \partial_{r} u + \frac{1}{r^{2}} \partial_{\theta}^{2} u + \partial_{z}^{2} u \\
& = \partial_{r}^{2} u^{r} e^{r} +\partial_{r}^{2} u^{\theta} e^{\theta} + \partial_{r}^{2} u^{z} e^{z} \\
& + \frac{1}{r} \partial_{r} u^{r} e^{r} + \frac{1}{r} \partial_{r} u^{\theta} e^{\theta} + \frac{1}{r} \partial_{r} u^{z} e^{z} \\
& + \frac{1}{r^{2}} \left( \partial_{\theta}^{2} u^{r} - 2 \partial_{\theta} u^{\theta} - u^{r} \right) e^{r} + \frac{1}{r^{2}}\left ( \partial_{\theta}^{2} u^{\theta} + 2 \partial_{\theta} u^{r} - u^{\theta} \right) e^{\theta} + \frac{1}{r^{2}} \partial_{\theta}^{2} u^{z}e^{z} \\
& +  \partial_{z}^{2} u^{r} e^{r} +\partial_{z}^{2} u^{\theta} e^{\theta} + \partial_{z}^{2} u^{z} e^{z} \\
& = \left[ \partial_{r}^{2} u^{r} + \frac{1}{r} \partial_{r} u^{r} + \frac{1}{r^{2}} ( \partial_{\theta}^{2} u^{r} - 2 \partial_{\theta} u^{\theta} - u^{r} ) + \partial_{z}^{2} u^{r} \right] e^{r}\\
& + \left[ \partial_{r}^{2} u^{\theta} + \frac{1}{r} \partial_{r} u^{\theta} + \frac{1}{r^{2}} ( \partial_{\theta}^{2} u^{\theta} + 2 \partial_{\theta} u^{r} - u^{\theta} ) + \partial_{z}^{2} u^{\theta} \right] e^{\theta}\\
& + \left[ \partial_{r}^{2} u^{z} + \frac{1}{r} \partial_{r} u^{z} + \frac{1}{r^{2}}  \partial_{\theta}^{2} u^{z}  + \partial_{z}^{2} u^{z} \right] e^{z}.
\end{aligned}
\end{equation}

For the cross product term $ \nabla \times B $, noting that

\begin{equation} \label{curlB}
\begin{aligned}
\nabla \times B = &( \partial_{x_{2}} B^{3} -  \partial_{x_{3}} B^{2}  ) e^{1} + ( \partial_{x_{3}} B^{1} -  \partial_{x_{1}} B^{3}  ) e^{2} \\
&+ ( \partial_{x_{1}} B^{2} -  \partial_{x_{2}} B^{1}  ) e^{3},
\end{aligned}
\end{equation}
\begin{equation}\label{B}
\begin{aligned}
B (t,x) & = B^{1}(t,x_{1},x_{2},x_{3})e^{1} +B^{2}(t,x_{1},x_{2},x_{3})e^{2} + B^{3}(t,x_{1},x_{2},x_{3})e^{3} \\
& = B^{r}(t,r,\theta,z)e^{r} + B^{\theta}(t,r,\theta,z)e^{\theta} + B^{z}(t,r,\theta,z)e^{z},
\end{aligned}
\end{equation}
and
\begin{equation}\label{BB}
\begin{aligned}
\begin{split}
\left\lbrace
\begin{array}{lr}
B^{1} =  B^{r} \cos \theta - B^{\theta} \sin \theta, \\
B^{2} =  B^{r} \sin \theta + B^{\theta} \cos \theta , \\
B^{3} = B^{z}.
\end{array}	
\right.
\end{split}	
\end{aligned}
\end{equation}
we obtain
\begin{equation}\label{curlB1}
\begin{aligned}
\nabla \times B =& \left( - \partial_{z}  B^{\theta} + \partial_{\theta}  B^{z} \cdot \frac{1}{r}\right) e^{r} + \left( \partial_{z}  B^{r} - \partial_{r}  B^{z} \right) e^{\theta} \\
&+ \left( \partial_{r} B^{\theta} - \partial_{\theta} B^{r}\cdot \frac{ 1}{r} +  \frac{B^{\theta}}{r} \right) e^{z}.
\end{aligned}
\end{equation}

In a similar way, we may further obtain
\begin{equation}\label{curlBB}
\begin{aligned}
&(\nabla \times B) \times B \\
=& \left[ \partial_{z}  B^{r} \cdot B^{z}  - \partial_{r}  B^{z}  \cdot B^{z} - \partial_{r}  B^{\theta}  \cdot B^{\theta} - \frac{(B^{\theta})^{2}}{r} + \partial_{\theta} B^{r}\cdot \frac{ B^{\theta}}{r} \right]	e^{r} \\
& + \left[  \partial_{z}  B^{\theta} \cdot B^{z} +  \partial_{r}  B^{\theta} \cdot B^{r} + \frac{B^{r}B^{\theta}}{r} - \partial_{\theta} B^{z}\cdot \frac{ B^{z}}{r} - \partial_{\theta} B^{r}\cdot \frac{ B^{r}}{r}\right ]e^{\theta} \\
& + \left[ - \partial_{z}  B^{\theta} \cdot B^{\theta} - \partial_{z}  B^{r} \cdot B^{r} + \partial_{r}  B^{z} \cdot B^{r} + \partial_{\theta} B^{z}\cdot \frac{ B^{\theta}}{r}\right ] e^{z}
\end{aligned}
\end{equation}
and
\begin{equation}\label{UB}
\begin{aligned}
u \times B = [ u^{\theta} B^{z}  - u^{z} B^{\theta} ]	e^{r} + [   u^{z} B^{r}  - u^{r} B^{z} ]e^{\theta} + [  u^{r} B^{\theta}  - u^{\theta} B^{r} ] e^{z},\\
\end{aligned}
\end{equation}
\begin{equation}
\begin{aligned}
&\nabla \times ( u \times B) \\
=& \left[ - \partial_{z} (u^{z} B^{r}  - u^{r} B^{z}) + \frac{1}{r} \partial_{\theta} ( u^{r} B^{\theta}  - u^{\theta} B^{r}) \right]e^{r} \\
& + \left[ \partial_{z}(u^{\theta} B^{z}  - u^{z} B^{\theta}) - \partial_{r} (u^{r} B^{\theta}  - u^{\theta} B^{r})\right ]e^{\theta} \\
& + \left[  \partial_{r} ( u^{z} B^{r}  - u^{r} B^{z}) + \frac{1}{r} ( u^{z} B^{r}  - u^{r} B^{z}) - \frac{1}{r} \partial_{\theta} ( u^{\theta} B^{z}  - u^{z} B^{\theta}) \right] e^{z}.
\end{aligned}
\end{equation}
In conclusion, the MHD system \eqref{mhd} can be rewritten in the cylindrical coordinate  as
\begin{equation} \label{mhdsim}
\begin{aligned}
\begin{split}
\left\lbrace
\begin{array}{lr}
\partial_{t} \rho + \frac{1}{r} \rho u^{r} + \partial_{r} (\rho u^{r}) + \partial_{z} (\rho u^{z}) + \frac{1}{r} \partial_{\theta} (\rho u^{\theta}) = 0, & \\
\rho \left( \partial_{t} u^{r} + u^{r} \partial_{r} u^{r} + u^{z} \partial_{z} u^{r} + \frac{1}{r} ( u^{\theta} \cdot \partial_{\theta} u^{r} - (u^{\theta})^{2}) \right) + \partial_{r} p & \\
+ \mu^{-1} \left[ \partial_{r}  B^{\theta}  \cdot B^{\theta} + \frac{(B^{\theta})^{2}}{r} - \partial_{z}  B^{r} \cdot B^{z}  + \partial_{r}  B^{z}  \cdot B^{z} - \partial_{\theta} B^{r}\cdot \frac{ B^{\theta}}{r} \right] = 0, & \\
\rho \left( \partial_{t} u^{\theta} + u^{r} \partial_{r} u^{\theta} + u^{\theta}u^{r}+ \frac{1}{r} ( u^{\theta} \cdot \partial_{\theta} u^{\theta}) + u^{z} \partial_{z} u^{\theta} \right) + \frac{1}{r} \partial_{\theta} p & \\
+ \mu^{-1} \left[ \partial_{z}  B^{\theta} \cdot B^{z} +  \partial_{r}  B^{\theta} \cdot B^{r} + \frac{B^{r}B^{\theta}}{r} - \partial_{\theta} B^{z}\cdot \frac{ B^{z}}{r} - \partial_{\theta} B^{r}\cdot \frac{ B^{r}}{r} \right] = 0 , & \\
\rho ( \partial_{t} u^{z} + u^{r} \partial_{r} u^{z} + u^{z} \partial_{z} u^{z} + \frac{1}{r}  u^{\theta} \cdot \partial_{\theta} u^{z} ) + \partial_{z} p & \\
+ \mu^{-1} [ \partial_{z}  B^{\theta} \cdot B^{\theta} + \partial_{z}  B^{r} \cdot B^{r} - \partial_{r}  B^{z} \cdot B^{r} - \partial_{\theta} B^{z}\cdot \frac{ B^{\theta}}{r}] = 0 ,& \\
\partial_{t} S + u^{r} \partial_{r} S + \frac{1}{r} u^{\theta} \partial_{\theta} S + u^{z} \partial_{z} S= 0 , & \\
\partial_{t} B^{r} - \left[ - \partial_{z} (u^{z} B^{r}  - u^{r} B^{z}) + \frac{1}{r} \partial_{\theta} ( u^{r} B^{\theta}  - u^{\theta} B^{r})\right ] = 0, & \\
\partial_{t} B^{\theta} -\left [ \partial_{z}(u^{\theta} B^{z}  - u^{z} B^{\theta}) - \partial_{r} (u^{r} B^{\theta}  - u^{\theta} B^{r}) \right] = 0, & \\
\partial_{t} B^{z} - \left[\partial_{r} ( u^{z} B^{r}  - u^{r} B^{z}) + \frac{1}{r} ( u^{z} B^{r}  - u^{r} B^{z}) - \frac{1}{r} \partial_{\theta} ( u^{\theta} B^{z}  - u^{z} B^{\theta})\right] = 0, & \\
\partial_{r} B^{r} + \frac{1}{r} B^{r} + \partial_{z} B^{z} + \frac{1}{r} \partial_{\theta} B^{\theta} = 0.&
\end{array}	
\right.
\end{split}	
\end{aligned}
\end{equation}

By the uniqueness of local existence result, if we assume initially $ u^{\theta}_{0} = B^{r}_{0} = B^{z}_{0} = 0 $, then $ u^{\theta} = B^{r} = B^{z} = 0 $ holds for all later time. If we further assume the solution is axisymmetric, i.e., in the form as \eqref{axisym}, then system \eqref{mhdsim} can be simplified as
\begin{equation} \label{2.1}
\begin{aligned}
\begin{split}
\left\lbrace
\begin{array}{lr}
\partial_{t} \rho + \frac{1}{r} \rho u^{r} + \partial_{r} (\rho u^{r}) + \partial_{z} (\rho u^{z}) = 0, & \\
\rho ( \partial_{t} u^{r} + u^{r} \partial_{r} u^{r} + u^{z} \partial_{z} u^{r} ) + \partial_{r} p + \mu^{-1} \left[ \partial_{r}  B^{\theta}  \cdot B^{\theta} + \frac{(B^{\theta})^{2}}{r} \right] = 0, & \\
\rho ( \partial_{t} u^{z} + u^{r} \partial_{r} u^{z} + u^{z} \partial_{z} u^{z} ) + \partial_{z} p + \mu^{-1} [ \partial_{z}  B^{\theta} \cdot B^{\theta} ] = 0, & \\
\partial_{t} S + u^{r} \partial_{r} S + u^{z} \partial_{z} S= 0 ,& \\
\partial_{t} B^{\theta} +  \partial_{z}( u^{z} B^{\theta}) + \partial_{r} (u^{r} B^{\theta} )  = 0,&
\end{array}	
\right.
\end{split}	
\end{aligned}
\end{equation}
which is equivalent to
\begin{equation}\label{mhdsim1}
\begin{aligned}
\begin{split}
\left\lbrace
\begin{array}{lr}
\partial_{t} \rho + \partial_{r} (\rho u^{r}) + \partial_{z} (\rho u^{z}) + \frac{1}{r} \rho u^{r}= 0, & \\
\partial_{t} (\rho u^{r}) + \partial_{r} ( \rho (u^r)^2 ) +\partial_{z} ( \rho u^{r}  u^{z} ) + \frac{1}{r} \rho  (u^{r})^{2} & \\
+ \partial_{r} p + \mu^{-1} \left[ \partial_{r}  B^{\theta}  \cdot B^{\theta} + \frac{(B^{\theta})^{2}}{r}\right ] =0, & \\
\partial_{t} (\rho u^{z}) + \partial_{z} ( \rho (u^{z})^2 ) +\partial_{r} ( \rho u^{z}  u^{r} ) + \frac{1}{r} \rho  u^{r}u^{z} & \\
+ \partial_{z} p + \mu^{-1} [ \partial_{z}  B^{\theta}  \cdot B^{\theta} ] =0, & \\
\partial_{t} S + u^{r} \partial_{r} S + u^{z} \partial_{z} S= 0 ,& \\
\partial_{t} B^{\theta} +  \partial_{z}( u^{z} B^{\theta}) + \partial_{r} (u^{r} B^{\theta} )  = 0. &
\end{array}	
\right.
\end{split}	
\end{aligned}
\end{equation}

\section{Proof of theorem $ \ref{thm1} $}

We are in position to prove the main theorem. Going back to the orthogonal coordinate system, \eqref{mhdsim1} is is equivalent to
\begin{equation} \label{eq}
\begin{aligned}
\begin{split}
\left\lbrace
\begin{array}{lr}
\partial_{t} \rho + \nabla \cdot (\rho u)= 0, & \\
\partial_{t} (\rho u) + \nabla \cdot (\rho u \otimes u ) + \nabla p + \mu^{-1}\left [ \nabla \frac{(B^{\theta} )^{2}}{2} + \frac{(B^\theta)^2}{r}  e^{r} \right]  = 0, & \\
\partial_tS+u\cdot \nabla S=0,& \\
\partial_t\left(\frac{B^\theta}r\right)+\nabla\cdot\left(\frac{B^\theta u}{r}\right)=0. &\\
\end{array}	
\right.
\end{split}	
\end{aligned}
\end{equation}
Before showing the proof, we first introduce a test function, which will be used to construct an integral averaged functional.
\begin{lem}\label{lem2}
	Let
	\begin{equation}\label{F}
	\begin{aligned}
	F(x) = \int_{S^{2}} e ^{ \omega \cdot x } d \omega. \ ( x \in  \R^3 ),
	\end{aligned}
	\end{equation}
	then $F(x)$ is a radial function satisfying
	\begin{equation}\label{Fpro}
	\begin{aligned}
	&  (i). \ F(x) > 0 ,~~F''(R)>0, \\
	&  (ii). \ F(x) \sim (1 + |x|)^{ - 1 } e^{|x|},\\
	&  (iii). \ F''_{RR} + \frac{2F_{R}}{R} - F = 0 , \\
	\end{aligned}
	\end{equation}
	where
	\begin{equation}
	\begin{aligned}
	|x| = \sqrt{r^{2} +z^{2} } \triangleq R .
	\end{aligned}
	\end{equation}
\end{lem}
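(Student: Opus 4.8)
The plan is to evaluate $F$ in closed form and then read off all three properties, since each is essentially a one–line consequence of the explicit formula. Since the surface measure $d\omega$ on $S^{2}$ is rotation invariant, for any rotation $\mathcal{O}$ the change of variables $\omega\mapsto\mathcal{O}\omega$ gives $F(\mathcal{O}x)=F(x)$, so $F$ is radial. To compute it I would align $x$ with the north pole and set $t=\cos\phi$, where $\phi$ is the polar angle measured from the direction of $x$; writing $R=|x|$ this collapses the integral to
\begin{equation}
F(R)=2\pi\int_{-1}^{1}e^{Rt}\,dt=\frac{4\pi\sinh R}{R}.
\end{equation}
From this expression $F(R)>0$ for all $R\ge 0$ (with $F(0)=4\pi$ read as the limit, since $\sinh R/R\to 1$), which gives the first half of (i).

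For the convexity statement $F''(R)>0$, the cleanest route is to differentiate the one–dimensional representation under the integral sign, which is legitimate because the domain is compact and the integrand is smooth:
\begin{equation}
F''(R)=2\pi\int_{-1}^{1}t^{2}e^{Rt}\,dt>0,
\end{equation}
the integrand being nonnegative and strictly positive on a set of full measure. This avoids the mildly awkward sign analysis one meets when differentiating $\sinh R/R$ directly.

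For the identity (iii) I would prefer the conceptual argument over grinding the closed form: each kernel obeys $\Delta_{x}e^{\omega\cdot x}=|\omega|^{2}e^{\omega\cdot x}=e^{\omega\cdot x}$ because $|\omega|=1$, so differentiating under the integral yields $\Delta F=F$ on $\R^{3}$. As $F$ is radial, $\Delta F=F''_{RR}+\frac{2}{R}F'_{R}$, which is exactly the claimed equation; alternatively one may substitute the closed form and cancel the terms $R^{2}\sinh R$, $\mp 2R\cosh R$, $\pm 2\sinh R$ directly.

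Finally, for the two–sided asymptotic (ii) I would write $F(R)=\frac{2\pi}{R}(e^{R}-e^{-R})$ and compare against $(1+R)^{-1}e^{R}$. The ratio
\begin{equation}
\frac{F(R)}{(1+R)^{-1}e^{R}}=\frac{2\pi(1+R)}{R}(1-e^{-2R})
\end{equation}
extends to a continuous positive function of $R\ge 0$ with finite positive limits $4\pi$ as $R\to 0^{+}$ and $2\pi$ as $R\to\infty$, hence is bounded above and below by positive constants, which is the meaning of $\sim$. The only point deserving a little care is that this is a genuinely \emph{two–sided} bound valid for all $x$, not merely an estimate in the large–$|x|$ regime; controlling the ratio uniformly down to $R=0$ is what makes $F$ usable as a multiplier in the averaged functional later. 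Beyond that I anticipate no real obstacle, as the whole lemma is driven by the explicit evaluation in the first step.
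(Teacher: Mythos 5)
Your proposal is correct and follows essentially the same route as the paper: both reduce $F$ to the one-dimensional closed form $\frac{C}{R}\left(e^{R}-e^{-R}\right)$ and obtain (iii) by combining $\Delta F = F$ with radial symmetry. The only differences are ones of detail rather than of method: where the paper cites \cite{LZ17} for radiality and the closed form and checks $F''>0$ from the explicit expression \eqref{F2D}, you derive the formula directly (with the constant $2\pi$) and get positivity of $F''$ immediately from the representation $F''(R)=2\pi\int_{-1}^{1}t^{2}e^{Rt}\,dt$, and you verify the two-sided bound (ii) explicitly via the ratio $\frac{2\pi(1+R)}{R}\left(1-e^{-2R}\right)$ — all of which are valid, self-contained ways of settling points the paper asserts by reference.
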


\begin{proof}
	
	From the monograph \cite{LZ17} (see page 308), we know $F(x)$ is radial and can be written
	as
	\begin{equation}  \label{Fr}
	\begin{aligned}
	F(R)&=C\left(\int_{0}^{1}e^{Rw_{1}}dw_{1}+\int_{0}^{1}e^{-Rw_{1}}dw_{1}%
	\right)\\ &=\frac{C}{R}\left(e^{R}-e^{-R}\right), \end{aligned}
	\end{equation}
	with some positive constant $C$. Direct computations yield
	\begin{equation}\label{F2D}
	F''(R)=C\left[\frac{e^{R}-e^{-R}}{R}-\frac{2(e^{R}+e^{-R})}{R^{2}}%
	+\frac{2(e^{R}-e^{-R})}{R^{3}}\right],
	\end{equation}
	then $\eqref{Fpro}_1$ and $\eqref{Fpro}_2$ follow from \eqref{Fr} and \eqref{F2D}.
	Noting \eqref{F}, $F(x)$ satisfies
	\[
	\Delta F=F,
	\]
	which implies $\eqref{Fpro}_3$ due to the radial symmetry.
\end{proof}

With $F(x)$ in hand, we first show the proof of Theorem \ref{thm1} with $\gamma=2$, which appears in the state equation $\eqref{mhd}_5$. Set
\begin{equation}\label{XY}
\begin{aligned}
& X(t) = \int_{\mathbb{R}^{3}} F(x) (\rho (t,x) -1) dx = \int_{\mathbb{R}^{3}} \int_{S^{2}} e ^{ \omega \cdot x }  (\rho (t,x) -1) d \omega dx, \\
& Y(t)  =\int_{\mathbb{R}^{3}} \int_{S^{2}} e ^{ \omega \cdot x } \rho (t,x) (u \cdot \omega)d \omega dx.
\end{aligned}
\end{equation}
Multiplying $(\ref{eq})_{1}$ by $e ^{ \omega \cdot x }$ and integrating by parts we get
\begin{equation}\label{dX}
\begin{aligned}
& \int_{\R^{3}} \int_{S^{2}} (\partial_{t} (\rho-1) + \nabla \cdot (\rho u))  e ^{ \omega \cdot x } d \omega dx \\
=& \frac{d}{dt}  \int_{\R^{3}} \int_{S^{2}} (\rho-1) e ^{ \omega \cdot x } d \omega dx - \int_{\R^{3}} \int_{S^{2}} \rho e ^{ \omega \cdot x } (u \cdot \omega) d \omega dx \\
=& 0,
\end{aligned}
\end{equation}
which means
\begin{equation}\label{dXY}
\begin{aligned}
\frac{dX}{dt}=Y.
\end{aligned}
\end{equation}
Then multiplying $(\ref{eq})_{2}$ by $e ^{ \omega \cdot x } \omega $ and integrating by parts one has
\begin{equation}\label{dY}
\begin{aligned}
& \int_{\R^{3}} \int_{S^{2}}  e ^{ \omega \cdot x } \Bigg[\sum\limits_{i=1}^{3} \frac{\partial (\rho u_{i} \omega_{i}) }{\partial t} + \sum\limits_{i,k=1}^{3} \frac{\partial (\rho u_{i} u_{k} \omega_{i}) }{\partial x_{k}} + \sum\limits_{i=1}^{3} \omega_{i} \frac{\partial p(\rho) }{\partial x_{i}} \\
&  + \mu^{-1} \left(\omega \cdot \nabla \frac{(B^\theta)^{2}}{2} + \frac{(B^\theta)^2}{r} \omega \cdot e^{r}\right)
\Bigg]  d \omega dx \\
=& \frac{d}{dt} \int_{\R^{3}} \int_{S^{2}} \rho e ^{ \omega \cdot x } (u \cdot \omega) d \omega dx - \int_{\R^{3}} \int_{S^{2}} \rho e ^{ \omega \cdot x } (u \cdot \omega)^{2} d \omega dx    \\
& -\int_{\R^{3}} \int_{S^{2}}  e ^{ \omega \cdot x }(p-\overline{p})  d \omega dx \\
& - \mu^{-1} \int_{\R^{3}} \frac{(B^\theta)^2}{2} F(x) dx + \mu^{-1} \int_{\R^{3}} \frac{(B^\theta)^2}{r} \nabla F(x) \cdot e^{r} dx \\
= &0.
\end{aligned}
\end{equation}
As long as the velocity field $u$ is $C^1$, there exists the particle paths
\begin{equation} \label{particle}
\begin{aligned}
\begin{split}
\left\lbrace
\begin{array}{lr}
\frac{dx}{dt}=u(t, x), & \\
x(0, \alpha)=\alpha. &\\
\end{array}	
\right.
\end{split}	
\end{aligned}
\end{equation}
Then $\eqref{eq}_3$ implies that $S$ remains constant along the paths, and hence it holds by combining \eqref{initialS}
\begin{equation}\label{SS}
\begin{aligned}
S(t, x)\ge \overline{S},~~~(t, x)\in \R_+\times \R^3,
\end{aligned}
\end{equation}
which in turn yields
\begin{equation}\label{pp}
\begin{aligned}
p-\overline{p}-(\rho-1)\ge Ae^{\overline{S}}(\rho-1)^2
\end{aligned}
\end{equation}
with $A=\left(2e^{\overline{S}}\right)^{-1}$. On the other hand, since $F(x)$ is radial,
\begin{equation}\label{Fer}
\begin{aligned}
\nabla F\cdot e^r&=(\partial_rFe^r+\partial_zFe^z)\cdot e^r\\
&=\partial_rF\\
&=F'_R\frac{\partial R}{\partial r}\\
&=F'_R\times \frac rR.
\end{aligned}
\end{equation}
Inserting $\eqref{Fpro}_3$, \eqref{pp} and \eqref{Fer} into \eqref{dY}, we come to
\begin{equation}\label{dY1}
\begin{aligned}
\frac{dY}{dt} \ge& C\int_{\R^3}(\rho-1)^2F(x)dx+X(t)+\mu^{-1} \int_{\R^{3}} (B^\theta)^2\left(\frac{F}{2}-\frac{F'_R}{R}\right) dx \\
=& C\int_{\R^3}(\rho-1)^2F(x)dx+X(t)+(2\mu)^{-1} \int_{\R^{3}} (B^\theta)^2F''_{RR}dx\\
\ge&X(t)+C\int_{\R^3}(\rho-1)^2F(x)dx,
\end{aligned}
\end{equation}
where we use the fact that $F''_{RR}\ge 0$. For the last nonlinear term in \eqref{dY1}, by H\"{o}lder inequality we have
\begin{equation}\label{nonX}
\begin{aligned}
X^2(t)\le&\int_{\R^3}(\rho-1)^2F(x)dx\times\int_{\R^3}F(x)dx\\
\lesssim&\int_{\R^3}(\rho-1)^2F(x)dx\times \int_0^{t+1}(1+R)e^RdR\\
\lesssim&\int_{\R^3}(\rho-1)^2F(x)dx\times (1+t)e^t.
\end{aligned}
\end{equation}
Finally, we can get an ODE system by combining \eqref{dXY}, \eqref{dY1} and \eqref{nonX}
\begin{equation}
\begin{aligned}
\begin{split}
\left\lbrace
\begin{array}{lr}
X^{\prime}(t) = Y(t), & \\
Y^{\prime}(t) \geq \frac{CX^{2}(t)}{e^{t} (t+1)} + X(t) . &
\end{array}	
\right.
\end{split}	
\end{aligned}
\end{equation}
And hence the result in Theorem \ref{thm1} comes from the following lemma
\begin{lem}(Lemma 2.1 in \cite{JZ20})\label{lem5}
	Let $X(t)$ be a smooth function that satisfies the following inequalities:
	\[
	X''(t)\ge \frac{CX^2(t)}{e^t(t+R_0)^{\frac{n-1}2}}+X(t), ~~~t>0, 1\le n\le 3,
	\]
	with initial data $X(0) + X'(0) = \e x_0$. Here, $x_0$ is a positive constant, $\e>0$ is small enough, and then $X(t)$ will blow up in finite time. Furthermore,
	the lifespan of $X(t)$ is
	\[
	\begin{aligned}
	\begin{split}
	T_0=\left\lbrace
	\begin{array}{lr}
	C\e^{-1},~~n=1, & \\
	C\e^{-2},~~n=2, & \\
	e^{C\e^{-1}},~~n=3. &
	\end{array}	
	\right.
	\end{split}	
	\end{aligned}
	\]
	
\end{lem}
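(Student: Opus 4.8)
The strategy is to regard the inequality as a superlinear perturbation of the linear comparison $X''\ge X$: first produce an exponential lower bound by a monotonicity argument, then recover the quadratic feedback and reduce to a scalar Riccati-type inequality whose non-autonomous weight dictates the lifespan. Concretely, I would introduce $H(t):=X(t)+X'(t)$ and $g(t):=e^{-t}H(t)$. Since the nonlinear term $N(t):=\frac{CX^2}{e^t(t+R_0)^{(n-1)/2}}$ is nonnegative, $H'=X''+X'\ge H+N$, whence
\[
g'(t)=e^{-t}\big(H'(t)-H(t)\big)\ge e^{-t}N(t)\ge 0 .
\]
Thus $g$ is nondecreasing with $g(0)=X(0)+X'(0)=\e x_0>0$, which already yields the exponential lower bound $H(t)\ge \e x_0\,e^{t}$.

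Next I would recover a usable quadratic feedback. Integrating $(e^{t}X)'=e^{t}H=e^{2t}g$ gives $e^{t}X(t)=X(0)+\int_0^t e^{2s}g(s)\,ds$, and the monotonicity of $g$ yields, for $t\ge t_0=O(1)$,
\[
X(t)\ \ge\ c_0\,e^{t}g(t-1)\ >\ 0 .
\]
Inserting this into $g'(t)\ge e^{-t}N(t)=\frac{CX^2(t)}{e^{2t}(t+R_0)^{(n-1)/2}}$ produces the Riccati-type inequality
\[
g'(t)\ \ge\ \frac{c\,g(t-1)^{2}}{(t+R_0)^{(n-1)/2}},\qquad g(t_0)\ge \e x_0 .
\]

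Finally I would integrate this inequality. Writing it (modulo the unit delay, which is absorbed using that $g$ is nondecreasing, e.g.\ by iterating over unit intervals) as $\frac{d}{dt}\big(-\tfrac1g\big)\gtrsim (t+R_0)^{-(n-1)/2}$ and integrating from $t_0$ shows that a globally defined $g$ would force
\[
\frac{1}{g(t_0)}\ \gtrsim\ \int_{t_0}^{t}(s+R_0)^{-(n-1)/2}\,ds\qquad\text{for all }t .
\]
For $1\le n\le 3$ the right-hand side diverges as $t\to\infty$, while the left-hand side is the finite quantity $\sim \e^{-1}$; hence $g$, and therefore $X\ge c_0e^{t}g(t-1)$, must blow up at some finite $T$. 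Balancing the two sides, i.e.\ solving $\int_{t_0}^{T}(s+R_0)^{-(n-1)/2}\,ds\sim \e^{-1}$, gives $T\sim\e^{-1}$ when $n=1$ (weight $\equiv1$), $T\sim\e^{-2}$ when $n=2$ (weight $\sim s^{1/2}$), and $T\sim\exp(C\e^{-1})$ when $n=3$ (weight $\sim s$), which are exactly the asserted lifespans; note also that this is where the restriction $1\le n\le 3$ enters, since for $n\ge4$ the integral converges and small data need not force blow-up.

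The main obstacle is the middle step. The nonlinearity is quadratic in $X$, but near blow-up $X'\gg X$, so $X$ is \emph{not} comparable to the monotone quantity $H=X+X'$ and one cannot directly close a Riccati inequality in $H$. The device of passing through the integral representation $e^{t}X=X(0)+\int_0^t e^{2s}g\,ds$ and exploiting the monotonicity of $g$ is what restores the quadratic self-interaction, at the cost of a benign time delay; carrying the sharp $\e$-dependence through the non-autonomous weight to obtain the precise lifespan in each dimension is the remaining delicate bookkeeping.
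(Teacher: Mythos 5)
First, a point of comparison: the paper itself does not prove this lemma at all --- it is quoted verbatim as Lemma 2.1 of \cite{JZ20} and used as a black box. So your proposal has to be judged against what a complete proof requires. Your opening move is correct and is indeed the standard one: with $H=X+X'$ and $g=e^{-t}H$ one has $H'\ge H+N$, hence $g'\ge e^{-t}N\ge 0$, $H(t)\ge \varepsilon x_0 e^{t}$, and the representation $e^{t}X(t)=X(0)+\int_0^t e^{2s}g(s)\,ds$ gives $X(t)\ge c_0e^{t}g(t-1)$ after some time $t_0$. (A minor caveat: only $X(0)+X'(0)$ is prescribed, so $X(0)$ may be a negative $O(1)$ quantity and in general $t_0=O(\log(1/\varepsilon))$ rather than $O(1)$; this is harmless for the stated lifespans.)

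The fatal gap is the final step. From $X(t)\ge c_0e^{t}g(t-1)$ you obtain the \emph{delayed} Riccati inequality $g'(t)\ge c\,w(t)\,g(t-1)^2$, with $w(t)=(t+R_0)^{-(n-1)/2}$, and then integrate it as though it read $g'\ge c\,w\,g(t)^2$, asserting that the unit delay ``is absorbed using that $g$ is nondecreasing.'' Monotonicity gives $g(t-1)\le g(t)$, which is the \emph{wrong} direction: one only gets
\begin{equation*}
\frac{d}{dt}\Bigl(-\frac1g\Bigr)=\frac{g'}{g^2}\ \ge\ c\,w(t)\,\frac{g(t-1)^2}{g(t)^2},
\end{equation*}
and the ratio $g(t-1)^2/g(t)^2$ admits no positive lower bound --- it degenerates precisely in the blow-up regime, which is where the argument must work. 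Indeed the delayed inequality genuinely does not imply finite-time blow-up: the function $g(t)=\varepsilon x_0\exp\bigl(e^{\,t-t_0}\bigr)$ is positive, nondecreasing, globally defined, satisfies $g(t_0)\ge\varepsilon x_0$, and satisfies $g'(t)\ge c\,w(t)\,g(t-1)^2$ for all $t\ge t_0+1$ (because $g(t-1)^2=(\varepsilon x_0)^2\exp\bigl((2/e)e^{\,t-t_0}\bigr)$ and $2/e<1$, so $g'$ dominates), yet $\int_{t_0}^{\infty}g'/g^2\,dt=1/g(t_0)<\infty$. Hence no contradiction with $\int^{\infty}w\,dt=\infty$ can be extracted from the properties you retained, and ``iterating over unit intervals'' only reproduces the same loss: the discrete recursion $b_{k+1}\ge b_k+c\,w_k\,b_{k-1}^2$ permits tower-type growth without any finite-time singularity.

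The loss occurs exactly when you replace $X(t)$ by $c_0e^{t}g(t-1)$ inside the quadratic term: that bound is adequate for moderately growing solutions but catastrophically lossy for rapidly growing ones. A correct proof must keep the \emph{undelayed} quadratic self-interaction. For instance, set $Z=e^{-t}X$; the hypothesis is then exactly $(Z'+2Z)'\ge C\,w(t)\,Z(t)^2$, with $Z\ge c_1\varepsilon$ after time $t_0$ and $V:=Z'+2Z\ge\varepsilon x_0$ nondecreasing, together with the undelayed reconstruction $e^{2t}Z(t)=e^{2t_1}Z(t_1)+\int_{t_1}^{t}e^{2s}V(s)\,ds$. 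One then runs an iteration in the variable $W(t)=\int_{t_0}^{t}w(s)\,ds$: a lower bound for $V$ of power type in $W$ feeds through the reconstruction to give the same power-type bound for $Z$ (here unit shifts and exponential averaging cost only constants, because the comparison functions grow polynomially in $W$, not doubly exponentially), and then through $V'\ge CwZ^2$ to double the exponent. The iterates diverge as soon as $W(T)\gtrsim\varepsilon^{-1}$, which yields $T\lesssim\varepsilon^{-1},\ \varepsilon^{-2},\ \exp(C\varepsilon^{-1})$ for $n=1,2,3$, and shows where $n\le 3$ enters ($\int^{\infty}w<\infty$ for $n\ge4$). Your balancing heuristic $\int^{T}w\sim\varepsilon^{-1}$ identifies the right answer, but the step that is supposed to produce it is exactly the one that fails.
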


For the case $\gamma>2$, the inequality \eqref{pp} will be changed to
\begin{equation}\label{pp1}
\begin{aligned}
\begin{split}
p-\overline{p}-(\rho-1)\ge & Ae^{\overline{S}}\left[\rho^\gamma-1-\gamma(\rho-1)\right]\\
\sim&
\left\lbrace
\begin{array}{rl}
|\rho -1|^{\gamma}, &  \rho \rightarrow +\infty\\
|\rho -1|^{2}, & \rho \rightarrow 1\\
C(\gamma, \overline{S}),    &  \rho \rightarrow 0\\
\end{array}	
\right.\\
\ge& \widetilde{C}(\gamma, \overline{S})(\rho -1)^{2},
\end{split}	
\end{aligned}
\end{equation}
then the result in Theorem \ref{thm1} can be obtained by a similar way as that of $\gamma=2$.

It is more complicated for the case $1<\gamma<2$, due to the different behavior of the nonlinearity
\begin{equation}\label{N}
\begin{aligned}
p-\overline{p}-(\rho-1)\ge& Ae^{\overline{S}}\left[\rho^\gamma-1-\gamma(\rho-1)\right]\\
\triangleq &N(\rho-1)
\end{aligned}
\end{equation}
when $\rho \rightarrow +\infty$ and $\rho \rightarrow 0$. However, we can use the Orlicz spaces techniques introduced in \cite{LS23} to handle it. For the details we may refer the reader to \cite{LS23}, and we only make a sketch of the key steps for the convenience. It follows from \eqref{dY} and \eqref{N} that
\begin{equation}\label{N1}
\begin{aligned}
X''(t)=&Y'(t)\ge \int_{\R^3}(p-\overline{p})F(x)dx\\
\ge &\int_{\R^3}N(\rho-1)F(x)dx+X(t),
\end{aligned}
\end{equation}
which yields
\begin{equation}\label{N2}
\begin{aligned}
Z''(t)+2Z'(t)\ge \int_{\R^3}N(\rho-1)\psi(t, x)dx
\end{aligned}
\end{equation}
by setting
\[
Z(t)=e^{-t}X(t),~~~\psi(t, x)=e^{-t}F(x).
\]
Introducing a $N$ function (see \cite{KR61}) $\Upsilon: \R \rightarrow[0,+\infty)$
\begin{equation}\label{def:Y}
\Upsilon(x) := N(|x|) =(|x|+1)^\gamma-1 - \gamma|x|,
\end{equation}
which is continuous, even, convex and satisfies the conditions
\begin{equation*}
\lim_{x\rightarrow 0} \frac{\Upsilon(x)}{x} = 0,
\qquad
\lim_{|x|\rightarrow +\infty} \frac{\Upsilon(x)}{|x|} = +\infty.
\end{equation*}
The first key step is that for $\rho>0$ we have from \eqref{N2}
\begin{equation}\label{N3}
\begin{aligned}
Z''(t)+2Z'(t)\gtrsim \int_{\R^3}\Upsilon(\rho-1)\psi(t, x)dx.
\end{aligned}
\end{equation}
The second key step is to show
\begin{equation}\label{N4}
\begin{aligned}
\int_{\R^3}\Upsilon(\rho-1)\psi(t, x)dx\gtrsim (1+t)^{-1}\Upsilon(Z),
\end{aligned}
\end{equation}
by using the special properties of Orlicz norm. Finally we come to a key inequality by combining \eqref{N3} and \eqref{N4}
\begin{equation}\label{N5}
\begin{aligned}
Z''(t)+2Z'(t)\gtrsim (1+t)^{-1}\Upsilon(Z).
\end{aligned}
\end{equation}
And hence the result in Theorem \ref{thm1} comes from the following lemma
\begin{lem}(Lemma 4 in \cite{LS23})\label{lem:lizhou-var}
	Let $0\le \lambda \le 1$. Assume that $I \in C^2([0,+\infty);\R)$ satisfies
	\begin{equation}\label{eq:lz}
	I''(t) + I'(t) \gtrsim (1+t)^{-\lambda} N(I(t))
	\end{equation}
	where $N(p), N'(p)>0$ for $p>0$ and
	\begin{gather*}
	N(p) \approx
	\begin{cases}
	p^{1+\alpha} &\text{if $0\le p \le 1$,}
	\\
	p^{1+\beta} &\text{if $p > 1$,}
	\end{cases}
	\end{gather*}
	for some $\alpha,\beta>0$.
	Suppose also
	\begin{equation*}
	I(0) = \e >0, \qquad I'(0) \ge 0.
	\end{equation*}
	Then, $I(t)$ blows up in a finite time. Moreover, if $\e>0$ is small enough, the lifespan $T_\e$ of $I(t)$ satisfies the upper bound
	\begin{equation*}
	T_\e \le
	\left\{
	\begin{aligned}
	&C \e^{-\frac{\alpha}{1-\lambda}} &&\text{if $0\le\lambda<1$,}
	\\
	&\exp(C \e^{-\alpha}) &&\text{if $\lambda=1$,}
	\end{aligned}
	\right.
	\end{equation*}
	where $C$ is a positive constant dependent on $\alpha,\beta,\lambda$, but independent of $\e$.
\end{lem}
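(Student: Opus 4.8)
The plan is to treat \eqref{eq:lz} as a damped second-order differential inequality and to extract from it an effective \emph{first-order} inequality, the point being that for small data the damping term $I'$ dominates the inertial term $I''$, so the sharp lifespan is governed by the balance $I' \approx (1+t)^{-\lambda}N(I)$ rather than by $I'' \approx (1+t)^{-\lambda}N(I)$. First I would record positivity and monotonicity. Writing the left-hand side with the integrating factor $e^{t}$, one has $I''+I' = e^{-t}(e^{t}I')'$, so \eqref{eq:lz} reads $(e^{t}I'(t))' \gtrsim e^{t}(1+t)^{-\lambda}N(I(t)) \ge 0$ as long as $I>0$. Since $I(0)=\e>0$ and $I'(0)\ge 0$, the quantity $e^{t}I'(t)$ is nondecreasing, whence $I'(t)\ge 0$ and $I(t)\ge \e>0$ on the whole maximal interval of existence. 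In particular $N(I(t))>0$ throughout, every quantity below is positive, and $I$ is nondecreasing; these facts are used repeatedly.

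Second, I would derive the effective first-order inequality by integrating the identity above twice and exchanging the order of integration. Using $I'(0)\ge 0$ and Fubini one finds, for $t\ge 1$,
\[
I(t) \ge \e + c\int_{0}^{t}(1+\tau)^{-\lambda}N(I(\tau))\,(1-e^{\tau-t})\,d\tau \ge \e + c'\int_{0}^{t-1}(1+\tau)^{-\lambda}N(I(\tau))\,d\tau,
\]
where the kernel $1-e^{\tau-t}$ arising from the double integration is bounded below by $1-e^{-1}$ on $[0,t-1]$. Setting $W(t)=\int_{0}^{t-1}(1+\tau)^{-\lambda}N(I(\tau))\,d\tau$, this gives $W'(t)=t^{-\lambda}N(I(t-1)) \gtrsim t^{-\lambda}N(\e+c'W(t-1))$, a first-order differential inequality (with a harmless unit delay) driven by the full two-regime nonlinearity $N$.

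Third, I would run an ODE comparison. Comparing $W$ with the solution of $w'=c'\,t^{-\lambda}N(\e+c'w)$, $w(1)=0$, and separating variables reduces everything to the divergence time of $\int_{\e}^{+\infty}\frac{dv}{N(v)}$ against $\int_{1}^{t}\tau^{-\lambda}d\tau$. The large-value regime $N(v)\approx v^{1+\beta}$ with $\beta>0$ makes $\int^{+\infty}\frac{dv}{N(v)}$ convergent, which is what forces actual blow-up in finite time; the $\e$-dependence comes entirely from the small regime, since $\int_{\e}^{1}\frac{dv}{v^{1+\alpha}}\sim \e^{-\alpha}$. Equating with $\int_{1}^{t}\tau^{-\lambda}d\tau$, which is $\sim (1+t)^{1-\lambda}$ for $\lambda<1$ and $\sim\log(1+t)$ for $\lambda=1$, yields the stated upper bounds $T_{\e}\lesssim \e^{-\alpha/(1-\lambda)}$ and $T_{\e}\lesssim \exp(C\e^{-\alpha})$ respectively. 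A supersolution comparison (accounting for the unit delay on unit time intervals) then transfers blow-up of $w$ to $W$, hence to $I$.

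The main obstacle is the second step: one must resist multiplying \eqref{eq:lz} by $I'$ and running a standard energy estimate. That approach measures the inertial balance $I''\sim(1+t)^{-\lambda}N(I)$ and formally points to the bound $I'^{2}\gtrsim (1+t)^{-\lambda}\mathcal{N}(I)$ with $\mathcal{N}'=N$, which would suggest the \emph{non-sharp} exponent $\alpha/(2-\lambda)$ and in fact fails to close because of an uncontrolled boundary-type term. The correct sharp rate is dictated by the damping-dominated balance, which is exactly why the integrating-factor/double-integration route is essential. The only genuine technical nuisance there is the unit time-shift introduced by localizing the convolution kernel near the endpoint $\tau=t$; since $N$ is monotone and $I$ is nondecreasing, this shift does not affect the leading-order lifespan asymptotics and is absorbed by a routine comparison argument.
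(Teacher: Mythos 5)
This lemma is not proved in the paper at all: it is imported verbatim as Lemma 4 of \cite{LS23} and used as a black box, so your attempt can only be judged on its own correctness and against the Li--Zhou-type argument of the cited source. Your first two steps are correct: the integrating factor gives $I'\ge 0$, $I\ge \e$ (after the standard continuity bootstrap to keep $I>0$), the Fubini computation leading to
\begin{equation*}
I(t)\ \ge\ \e + c\int_0^t (1+s)^{-\lambda}N(I(s))\bigl(1-e^{s-t}\bigr)\,ds
\end{equation*}
is right, and the separation-of-variables bookkeeping in your third step correctly identifies where the scalings $\e^{-\alpha/(1-\lambda)}$ and $\exp(C\e^{-\alpha})$ come from (the small regime gives $\int_\e^1 dv/N(v)\sim \e^{-\alpha}$, the large regime makes $\int_1^\infty dv/N(v)$ finite).

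The genuine gap is your claim that the unit delay is ``harmless'' and ``absorbed by a routine comparison argument.'' It is not: the facts you carry into the third step --- $I$ nondecreasing, $I\ge\e$, and $I(t)\ge \e + c'\int_0^{t-1}(1+s)^{-\lambda}N(I(s))\,ds$ --- do \emph{not} imply finite-time blow-up, so no comparison argument built on them can terminate the proof. Indeed, the delayed integral \emph{equation} obtained by replacing the inequality with equality (with $I\equiv\e$ on $[0,1]$) has a globally defined, finite, nondecreasing solution by the method of steps: on each interval $[n,n+1]$ the right-hand side involves only the already-constructed values of $I$ on $[0,n]$, so $I$ is recovered there by integrating a finite continuous function; this solution grows roughly like an iterated exponential, and the time it needs to reach a level $M$ diverges as $M\to\infty$. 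Since this function satisfies every property you derived, blow-up of the non-delayed ODE $w'=c't^{-\lambda}N(\e+c'w)$ simply does not transfer to $W$; the delayed inequality yields growth, never a finite $T_\e$. What you discarded when truncating the kernel $1-e^{s-t}$ on $[t-1,t]$ is precisely the information that produces blow-up: near the blow-up time the operative balance is the undamped one, $I''\approx (1+t)^{-\lambda}N(I)$, giving $I'\approx \bigl((1+t)^{-\lambda}\mathcal{N}(I)\bigr)^{1/2}$ with $\mathcal{N}'=N$, which is far \emph{smaller} than $(1+t)^{-\lambda}N(I)$ --- so the pointwise bound your comparison would produce is in fact false near blow-up. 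A correct proof can use your truncated inequality only to drive $I$ up to a large threshold within the claimed time scale, and must then return to the second-order structure for the endgame (e.g.\ an invariant-region argument giving $I'\gtrsim \sqrt{\mathcal{N}(I)}$, whose reciprocal is integrable at infinity precisely because $\beta>0$). Your proposal explicitly renounces any further use of the second-order inequality after the reduction, so the finiteness of $T_\e$ --- the heart of the lemma --- remains unproved.
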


\section*{Acknowledgement}

The authors were partially supported by NSFC(12271487, 12171097).

\clearpage


\bibliographystyle{plain}

\begin{thebibliography}{100}


\bibitem{Chan}
S. Chandrasekhar, Hydrodynamics and hydromagnetic stability, The International Series of Monographs on Physics Clarendon Press. Oxford 1961.


\bibitem{Chris07}
D. Christodoulou, The formation of shocks in 3-dimensional fluids,
EMS Monogr. Math.
European Mathematical Society (EMS), Z$ \ddot{u} $rich, 2007, viii+992 pp.



\bibitem{FM72}
A. E. Fischer and J. E. Marsden, The Einstein evolution equations as a first-order quasilinear symmetric hyperbolic system, Comm. Math. Phys. 28 (1972), 1-38.


\bibitem{F54}
K. O. Friedrichs, Symmetric hyperbolic linear differential equations, Comm. Pure
Appl. Math. 7(1954), 345-392.

\bibitem{FL71}
K. O. Friedrichs and P. D. Lax, Systems of Conservation Equations with a Convex Extension,
Proc. Nat. Acad. Sci. USA 68(1971), 1686-1688.


\bibitem{JZ20} Z. T. Jin and Y. Zhou, Formation of finite-time singularities
for nonlinear hyperbolic systems with small initial disturbances, J. Math. Phys. 61(7)(2020), 071510

\bibitem{John74}
F. John, Formation of singularities in one-dimensional nonlinear wave propagation,
Comm. Pure Appl. Math. 27(1974), 377-405.


\bibitem{Kat75a}
T. Kato, The Cauchy problem for quasi-linear symmetric hyperbolic systems, Arch. Ration.
Mech. Anal. 58(1975), 181-205.
\bibitem{KR61}
M. A. Krasnosel'skii and Ya. B. Rutickii. Convex functions and Orlicz spaces, P. Noord-
hoff Ltd., Groningen, 1961. Translated from the first Russian edition by Leo F. Boron.

\bibitem{LXZ22} N.-A. Lai, W. Xiang and Y. Zhou, Global instability of the
multi-dimensional plane shocks for the isothermal flow, Acta Math. Sci. Ser. B Engl. Ed. 42(3)(2022), 887-902.

\bibitem{LS23} N.-A. Lai and N. M. Schiavone,
Lifespan estimates for the compressible Euler equations with damping via Orlicz spaces techniques, J. Evol. Equ. (2023) 23:65. DOI: 10.1007/s00028-023-00918-7.

\bibitem{Lax71}
P. D. Lax, Shock waves and entropy, pp. 603-634 in Contributions to Nonlinear Functional
Analysis, edited by E. Zarantonello, Academic Press, 1971.

\bibitem{Lax64}
P. D. Lax, Development of singularities of solutions of nonlinear hyperbolic partial differential
equations, J. Math. Phys. 5(1964), 611-613.
\bibitem{Lax73}
P. D. Lax, Hyperbolic Systems of Conservation Laws and the Mathematical Theory of Shock
Waves, Regional Conf. Series in Appl. Math. 13, SIAM, 1973.


\bibitem{LZ17} T. T. Li and Y. Zhou, Nonlinear wave equations, Vol. 2. Translated from the Chinese by Yachun Li. Series in Contemporary Mathematics, 2. Shanghai Science and Technical Publishers, Shanghai; Springer-Verlag, Berlin, 2017. xiv+391 pp.

\bibitem{Liu79}
T. P. Liu, Development of Singularities in the Nonlinear Waves
for Quasi-Linear Hyperbolic Partial Differential Equations, J. Differential Equations, 33(1979), 92-111.


\bibitem{Ma84} A. Majda, Compressible fluid flow and systems of conservation
laws in several space variables, Applied Mathematical Sciences,
Springer-Verlag, New York, 1984:53.


\bibitem{Ram89} M. A. Rammaha, Formation of singularities in compressible
fluids in two-space dimensions, Proc. Amer. Math. Soc. 107(3)(1989), 705-714.


\bibitem{Ram94} M. A. Rammaha,
On the formation of singularities in magnetohydrodynamic waves,
J. Math. Anal. Appl. 188(3)(1994), 940-955.


\bibitem{Sid84} T. C. Sideris, Formation of singularities of solutions to nonlinear hyperbolic equations, Arch. Rational Mech. Anal. 86(1984), 369-381.



\bibitem{Sid85} T. C. Sideris, Formation of singularities in
three-dimensional compressible fluids, Comm. Math. Phys. 101(4)(1985),
475-485.

\bibitem{Sid91} T. C. Sideris, The lifespan of smooth solutions to the three dimensional compressible Euler equations and the incompressible limit, Indiana Univ Math J, 40(2)(1991), 535-550.

	
	
	
\end{thebibliography}
\end{CJK*}

\end{document}